 \newtheorem{thm}{Theorem}[section]
 \newtheorem{cor}[thm]{Corollary}
 \newtheorem{prop}[thm]{Proposition}
 \theoremstyle{definition}
 \newtheorem{defn}[thm]{Definition}
 \theoremstyle{remark}
 \newtheorem{rem}[thm]{Remark}
 \newtheorem{ex}[thm]{Example}
 \numberwithin{equation}{section}
\newcommand{\ve}{\varepsilon}
\newcommand{\NN}{\mathbb{N}}
\begin{document}

\title[Generalized Kannan type mappings]{Fixed point theorem for generalized Kannan type mappings}


\author*[1]{\fnm{Evgeniy} \sur{Petrov}}\email{eugeniy.petrov@gmail.com}

\author[2]{\fnm{Ravindra K.} \sur{Bisht}}\email{ravindra.bisht@yahoo.com}

\affil*[1]{
\orgdiv{Function Theory Department},
\orgname{Institute of Applied Mathematics and Mechanics of the NAS of Ukraine}, \orgaddress{\street{Batiuka str. 19},
\city{Slovyansk},
\postcode{84116},
\country{Ukraine}}}

\affil[2]{
\orgdiv{Department of Mathematics},
\orgname{National Defence Academy},
\orgaddress{
\city{Pune},
\state{Khadakwasla},
\country{India}}}



\abstract{We introduce a new type of mappings in metric spaces which are three-point analogue of the well-known Kannan type mappings and call them generalized Kannan type mappings. It is shown that in general case such mappings are discontinuous but continuous at fixed points as well as Kannan type mappings and that these two classes of mappings are independent. The fixed point theorem for generalized Kannan type mappings is proved. Additional conditions of asymptotic regularity and continuity allow us to extent the class of mappings for which the fixed point theorems hold. Following Kannan, we also obtain two other fixed point theorems for generalized Kannan type mappings in metric spaces which are not obligatory complete.}

\keywords{fixed point theorem, Kannan type mapping, metric space}


\pacs[MSC Classification]{Primary 47H10; Secondary 47H09}

\maketitle

\section{Introduction}
In~\cite{Ka68} Kannan proved the following result which gives the fixed point for discontinuous mappings: let $T\colon X\to X$ be a mapping on a complete metric space $(X,d)$ with
  \begin{equation}\label{e0}
   d(Tx,Ty)\leqslant \lambda (d(x,Tx)+d(y,Ty))
  \end{equation}
where $0\leqslant \lambda<\frac{1}{2}$ and $x,y \in X$. Then $T$ has a unique fixed point.

If for the mapping $T\colon X\to X$ there exists $0\leqslant \lambda<\frac{1}{2}$ such that the inequality~(\ref{e0}) holds for all $x,y \in X$, then  $T$ is called \emph{Kannan type mapping}.

Subrahmanyam~\cite{S75} proved that this result of Kannan characterizes the metric completeness. That is, a metric space $X$ is complete if and only if every Kannan type mapping on $X$ has a fixed point. Note that Banach contractions do not characterize the metric completeness. For an example of a metric space $X$ such that $X$ is not complete and every contraction on $X$ has a fixed
point see~\cite{Co59}. See, e.g., also papers~\cite{SST98,Su05, KT08,KS08,DLS09,DLS10,Be04,Be07} on the similarities and differences between Banach contractions and Kannan type mappings.

Typically, in the Fixed point theory, it is possible to distinguish at least three types of generalizations of the Kannan theorem:
in the first case, the contractive nature of the mapping is weakened, see, e.g.~\cite{Is74,Re71,Re712,Rh77,Bi72,Go19};
in the second case the topology is weakened, see, e.g.~\cite{Ak22,KI19,AA12,DLG12,Ho22};
the third case  involves theorems for some {K}annan-type multivalued mappings, see, e.g.~\cite{NS10, Um15,DD11}.

In~\cite{P23}, it was introduced a new type of mappings, which can be characterized as mappings contracting perimeters of triangles.
 \begin{defn}\label{d0}
Let $(X,d)$ be a metric space with $|X|\geqslant 3$. We shall say that $T\colon X\to X$ is a \emph{mapping contracting perimeters of triangles} on $X$ if there exists $\alpha\in [0,1)$ such that the inequality
  \begin{equation}\label{mcpt}
   d(Tx,Ty)+d(Ty,Tz)+d(Tx,Tz) \leqslant \alpha (d(x,y)+d(y,z)+d(x,z))
  \end{equation}
  holds for all three pairwise distinct points $x,y,z \in X$.
\end{defn}

\begin{rem}
Note that the requirement for $x,y,z\in X$ to be pairwise distinct is essential. One can see that otherwise this definition is equivalent to the definition of contraction mapping.
\end{rem}
A fixed-point theorem for such mappings was proved. Although the proof of this theorem is based on the ideas of the proof of Banach's classical theorem, the essential difference is that the definition of these mappings is based on the mapping of three points of the space instead of two. Moreover, a condition which prevents these mappings from having periodic points of prime period 2 is required. Notably, ordinary contraction mappings form an important subclass of these mappings.

Following~\cite{P23} in the next definition we introduce three-point analogue of the Kannan type mappings.
\begin{defn}\label{d1}
Let $(X,d)$ be a metric space with $|X|\geqslant 3$. We shall say that $T\colon X\to X$ is a \emph{generalized Kannan type mapping} on $X$ if there exists $\lambda\in [0,\frac{2}{3})$ such that the inequality
  \begin{equation}\label{e1}
   d(Tx,Ty)+d(Ty,Tz)+d(Tx,Tz) \leqslant \lambda (d(x,Tx)+d(y,Ty)+d(z,Tz))
  \end{equation}
  holds for all three pairwise distinct points $x,y,z \in X$.
\end{defn}

In Examples~\ref{exa3} and~\ref{exa1} we show that the classes of Kannan type mappings and generalized Kannan type mappings are independent.

In Section~\ref{sec2}, we study connection between generalized Kannan type mappings, Kannan type mappings and mappings contracting perimeters of triangles. Additionally, we provide an example of a discontinuous generalized Kannan-type mapping.

In Section~\ref{sec3}, we prove the main result of this paper Theorem~\ref{t1}, which is a fixed point theorem for generalized Kannan type mappings. It is noteworthy that this theorem asserts that the number of fixed points is at most two. Furthermore, it is demonstrated that the generalized Kannan-type mappings are continuous at fixed points.

In Section~\ref{sec4}, we study asymptotically regular generalized Kannan type mappings. The condition of asymptotic regularity allows us to extend the value of the parameter $\lambda$ in~(\ref{e1}) to the set $[0,1)$ instead of $[0,\frac{2}{3})$ in the fixed point theorem, see Theorem~\ref{gkt1m}. The additional condition of continuity  allows us to obtain a fixed point theorem for the class of generalized $F$-Kannan type mappings, see Theorem~\ref{gkt2m}, and extend the value of the parameter $\lambda$ to the set $[0,\infty)$, see Corollary~\ref{cgkt2m}.

In Section~\ref{sec5}, following  work~\cite{Ka69} of Kannan, we obtain two other fixed point theorems for generalized Kannan type mappings. In the first case, we omit the requirement for the metric space $X$ to be complete, consider that $T\colon X\to X$ is continuous at some point $x^*\in X$ and that there exists a point $x_0\in X$ such that the sequence of iterates $x_n=Tx_{n-1}$, $n=1,2,...$, has a subsequence $x_{n_k}$, converging to $x^*$, see Theorem~\ref{t2}. In the second case, we suppose additionally that the mapping $T$ is continuous at the whole space but not only at the point $x_0$ and that condition~(\ref{e1}) holds only on an everywhere dense subset of the space, see Theorem~\ref{t3}.

\section{Some properties of generalized Kannan type mappings}\label{sec2}

In this section we study connections between generalized Kannan type mappings, Kannan type mappings and mappings contracting perimeters of triangles.

\begin{prop}
Kannan type mappings with $\lambda\in[0,\frac{1}{3})$ are generalized Kannan type mappings.
\end{prop}
\begin{proof}
Let $(X,d)$ be a metric space with $|X|\geqslant 3$, $T\colon X\to X$ be a Kannan type mapping and let $x,y,z\in X$ be pairwise distinct. Consider inequality~(\ref{e0}) for the pairs
$x, z$ and $y, z$:
  \begin{equation}\label{e02}
   d(Tx,Tz)\leqslant \lambda (d(x,Tx)+d(z,Tz)),
  \end{equation}
  \begin{equation}\label{e01}
   d(Ty,Tz)\leqslant \lambda (d(y,Ty)+d(z,Tz)).
  \end{equation}
Summarizing the left and the right parts of inequalities~(\ref{e0}), ~(\ref{e02}) and~(\ref{e01}) we obtain
  \begin{equation*}
   d(Tx,Ty)+d(Ty,Tz)+d(Tx,Tz) \leqslant 2\lambda (d(x,Tx)+d(y,Ty)+d(z,Tz))
  \end{equation*}
Hence, we get the desired assertion.
\end{proof}

\begin{prop}\label{p1}
Let $(X,d)$ be a metric space and let  $T\colon X\to X$ be a generalized Kannan type mapping with some  $\lambda\in [0,\frac{2}{3})$. If $x$ is an accumulation point of  $X$ and $T$ is continuous at $x$, then the inequality
\begin{equation}\label{w1}
 d(Tx,Ty)\leqslant \lambda\left(d(x,Tx)+\frac{d(y,Ty)}{2}\right)
\end{equation}
holds for all points $y\in X$.
\end{prop}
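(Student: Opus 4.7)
The plan is to exploit the fact that $x$ is an accumulation point by choosing a sequence in $X\setminus\{x\}$ converging to $x$, and then to pass to the limit in inequality~(\ref{e1}) applied to triples of the form $(x,y,x_n)$, using the continuity of $T$ at $x$ to collapse two of the three terms on the left-hand side into $2d(Tx,Ty)$.

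First I would dispense with the trivial case $y=x$, where both sides of~(\ref{w1}) vanish (up to the automatically nonnegative term on the right). So assume $y\neq x$. Since $x$ is an accumulation point of $X$, pick a sequence $\{x_n\}\subset X\setminus\{x\}$ with $x_n\to x$. Because $y\neq x$, we have $x_n\neq y$ for all sufficiently large $n$, so from some index on the three points $x,y,x_n$ are pairwise distinct and Definition~\ref{d1} applies:
\begin{equation*}
 d(Tx,Ty)+d(Ty,Tx_n)+d(Tx,Tx_n)\leqslant \lambda\bigl(d(x,Tx)+d(y,Ty)+d(x_n,Tx_n)\bigr).
\end{equation*}

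Next I would pass to the limit $n\to\infty$. Continuity of $T$ at $x$ gives $Tx_n\to Tx$, so by the triangle inequality $d(Tx,Tx_n)\to 0$ and $d(Ty,Tx_n)\to d(Ty,Tx)$. Combining $x_n\to x$ with $Tx_n\to Tx$ also yields $d(x_n,Tx_n)\to d(x,Tx)$. The inequality above therefore becomes
\begin{equation*}
 2\,d(Tx,Ty)\leqslant \lambda\bigl(2\,d(x,Tx)+d(y,Ty)\bigr),
\end{equation*}
and dividing by $2$ produces~(\ref{w1}).

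The only subtlety is the pairwise-distinctness requirement in Definition~\ref{d1}, which is precisely why we need $x$ to be an accumulation point (to ensure $x_n\neq x$) and why we must discard finitely many indices where $x_n$ might equal $y$. Both are handled in a single line, so no genuine obstacle remains; the proof is essentially a limit computation inside the triangle inequality~(\ref{e1}).
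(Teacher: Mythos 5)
Your proof is correct and follows essentially the same route as the paper's: apply inequality~(\ref{e1}) to the triple $x,y,z_n$ for a sequence $z_n\to x$ of points distinct from $x$ and (eventually) from $y$, use continuity of $T$ at $x$ to pass to the limit, and divide the resulting inequality $2d(Tx,Ty)\leqslant\lambda(2d(x,Tx)+d(y,Ty))$ by $2$. No discrepancies worth noting.
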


\begin{proof}
Let $x\in X$ be an accumulation point and let $y\in X$. If $y=x$, then clearly~(\ref{w1}) holds. Let now $y\neq x$. Since $x$ is an accumulation point, then there exists a sequence $z_n\to x$ such that $z_n\neq x$, $z_n\neq y$ and all $z_n$ are different.
Hence, by~(\ref{e1}) the inequality
  \begin{equation*}
   d(Tx,Ty)+d(Ty,Tz_n)+d(Tx,Tz_n) \leqslant \lambda (d(x,Tx)+d(y,Ty)+d(z_n,Tz_n))
  \end{equation*}
holds for all $n\in \NN$. Since $z_n\to x$ and  $T$ is continuous at $x$ we have $Tz_n \to Tx$. Since every metric is continuous we have $d(z_n, Tz_n) \to d(x,Tx)$. Hence, we get~(\ref{w1}).
\end{proof}
\begin{cor}\label{cor1}
Let $(X,d)$ be a metric space, $T\colon X\to X$ be a continuous generalized Kannan type mapping and let all points of $X$ are accumulation points. Then $T$ is a Kannan type mapping.
\end{cor}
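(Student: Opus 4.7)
The plan is to apply Proposition~\ref{p1} symmetrically in both arguments and then average the two resulting estimates. Since every point of $X$ is an accumulation point and $T$ is continuous everywhere, the hypotheses of Proposition~\ref{p1} are satisfied at every point of $X$. Thus for any two points $x, y \in X$ I would first invoke the proposition at $x$ to obtain
\begin{equation*}
d(Tx,Ty) \leqslant \lambda\Bigl(d(x,Tx) + \tfrac{1}{2}d(y,Ty)\Bigr),
\end{equation*}
and then invoke the proposition at $y$, using the symmetry $d(Tx,Ty) = d(Ty,Tx)$, to obtain the complementary estimate
\begin{equation*}
d(Tx,Ty) \leqslant \lambda\Bigl(d(y,Ty) + \tfrac{1}{2}d(x,Tx)\Bigr).
\end{equation*}

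Next, I would add these two inequalities and divide by $2$. The right-hand sides combine to $\tfrac{3\lambda}{2}\bigl(d(x,Tx)+d(y,Ty)\bigr)$, so after dividing by $2$ one arrives at
\begin{equation*}
d(Tx,Ty) \leqslant \tfrac{3\lambda}{4}\bigl(d(x,Tx)+d(y,Ty)\bigr).
\end{equation*}

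Finally, I would verify that the new constant qualifies $T$ as a Kannan type mapping: since $\lambda \in [0,\tfrac{2}{3})$, one has $\tfrac{3\lambda}{4} \in [0,\tfrac{1}{2})$, which is exactly the range required in the definition of Kannan type mapping following inequality~(\ref{e0}). There is no real obstacle here; the only slightly delicate point is to remember to apply Proposition~\ref{p1} twice (once at each argument) to symmetrize the asymmetric conclusion~(\ref{w1}), and to confirm that the resulting coefficient stays strictly below $\tfrac{1}{2}$.
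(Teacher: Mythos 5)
Your proposal is correct and is essentially identical to the paper's own proof: both apply Proposition~\ref{p1} at each of the two arguments to get the symmetric pair of estimates~(\ref{w1}) and~(\ref{w2}), add them, divide by $2$, and check that $\frac{3\lambda}{4}<\frac{1}{2}$ because $\lambda<\frac{2}{3}$. No issues.
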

\begin{proof}
According to Proposition~\ref{p1}, inequality~(\ref{w1}) holds as well as the inequality
\begin{equation}\label{w2}
 d(Tx,Ty)\leqslant \lambda\left(d(y,Ty)+\frac{d(x,Tx)}{2}\right).
\end{equation}
Summarizing the left and the right parts of ~(\ref{w1}) and ~(\ref{w2}) and dividing both parts of the obtained inequality by $2$ we get
\begin{equation*}
 d(Tx,Ty)\leqslant \frac{3\lambda}{4}\left(d(x,Tx)+d(y,Ty)\right).
\end{equation*}
Since $\lambda\in [0,\frac{2}{3})$, we have $\frac{3\lambda}{4} \in [0,\frac{1}{2})$, which completes the proof.
\end{proof}

\begin{prop}
Let $(X, d)$ be a metric space with $|X| \geq 3$, and $T\colon X \to X$ be a mapping contracting perimeters of triangles with a constant $0\leqslant \alpha < \frac{1}{4}$. Then $T$ is a generalized Kannan type mapping with respect to the metric $d$.
\end{prop}
\begin{proof}
Applying several times triangle inequality to the right part of~(\ref{mcpt}) we get
\begin{multline*}\label{e12}
    d(Tx,Ty)+d(Ty,Tz)+d(Tx,Tz)
    \\ \leqslant  \alpha  (d(x,Tx)+d(Tx,y)+d(y,Ty)+d(Ty,z)+d(z,Tz)+d(Tz,x))
    \\ \leqslant  \alpha  (d(x,Tx)+d(Tx,Ty)+d(Ty,y)+d(y,Ty)
    \\+d(Ty,Tz)+d(Tz,z)+d(z,Tz)+d(Tz,Tx)+d(Tx,x)).
\end{multline*}
Rearranging this inequality, we obtain

\begin{equation*}
 d(Tx, Ty)+d(Ty,Tz)+d(Tz,Tx) \leq \frac{2\alpha}{1-\alpha} (d(x, Tx) + d(y,Ty) + d(z,Tz))
\end{equation*}
for  all three pairwise distinct points $x, y, z\in X$.

Since $0 < \lambda = \frac{2\alpha}{1 - \alpha} < \frac{2}{3}$, we have that $T$ is a generalized Kannan type mapping.
\end{proof}

\begin{ex}\label{exa3}
Let us construct an example of the Kannan-type mapping which is not  a generalized Kannan-type mapping. Let $X=[0,1]$ and let $d$ be the Euclidean distance on $X$. Consider a mapping $T\colon X\to X$ such that $T(x)=x/a$ for some real $a>1$. Without loss of generality consider that $x\geqslant y$. Consider inequality~(\ref{e0}) for this mapping
$$
\frac{x}{a}-\frac{y}{a}\leqslant \lambda\left(x-\frac{x}{a} +y - \frac{y}{a}\right).
$$
Hence,
\begin{equation}\label{ex3}
x-y\leqslant \lambda(a-1)(x+y).
\end{equation}
It is easy to see that inequality~(\ref{ex3}) holds for all $x\geqslant y$ iff $\lambda(a-1)\geqslant 1$.
Consider the system of inequalities
$$
\begin{cases}
0\leqslant \lambda <\frac{1}{2},\\
\lambda(a-1)\geqslant 1.
\end{cases}
$$
Hence,
$$
\frac{1}{a-1}\leqslant \lambda <\frac{1}{2}.
$$
Thus, $T$ is a Kannan type mapping if $a>3$.

Further, without loss of generality suppose that $x>y>z$. Consider now inequality~(\ref{e1}) for the same mapping $T$ and pairwise distinct $x,y,z \in [0,1]$:
$$
\frac{1}{a}(x-y+x-z+y-z)\leqslant \lambda\left(1-\frac{1}{a}\right)\left(x+y+z\right),
$$
$$
(2x-2z)\leqslant \lambda\left(a-1\right)\left(x+y+z\right),
$$

\begin{equation}\label{ex31}
x-z\leqslant \frac{\lambda}{2}\left(a-1\right)\left(x+y+z\right).
\end{equation}
It is easy to see that inequality~(\ref{ex31}) holds for all $x>y> z$ iff $\frac{\lambda}{2}(a-1)\geqslant 1$.
Consider the system of inequalities
$$
\begin{cases}
0\leqslant \lambda <\frac{2}{3},\\
\lambda(a-1)\geqslant 2.
\end{cases}
$$
Hence,
\begin{equation}\label{ex48}
\frac{2}{a-1}\leqslant \lambda <\frac{2}{3}.
\end{equation}
Thus, $T$ is a generalized Kannan type mapping only if $a>4$.
Hence for $a\in (3,4]$ the mapping $T$ is a Kannan type mapping but not a generalized Kannan type mapping.
\end{ex}

\begin{ex}
Let us show that generalized Kannan type mappings in general case are discontinuous as well as Kannan type mappings.
Let $X=[0,1]$ and let $d$ be the Euclidean distance on $X$. Consider a mapping $T\colon X\to X$ such that
$$
T(x)=
\begin{cases}
\frac{x}{a},\quad x\in [0,\frac{1}{2}],\\
\frac{x}{b},\quad x\in (\frac{1}{2},1],\\
\end{cases}
$$
where $a,b>1$, $a\neq b$. It is clear that $T$ is discontinuous at $x=\frac{1}{2}$.
Let us show that there exist $a$ and $b$ such that inequality~(\ref{e1}) holds for all three pairwise distinct points $x, y, z \in X$ and for some $0\leqslant \lambda<\frac{2}{3}$, i.e. that $T$ is, indeed, a generalized Kannan type mapping. Without loss of generality consider that  $x>y>z$. It is cleat that it is enough to consider the following cases:

1) $x,y,z\in [0,\frac{1}{2}]$,

2) $x,y,z\in (\frac{1}{2},1]$,

3) $z, y \in [0,\frac{1}{2}]$, $x\in (\frac{1}{2},1]$,

4) $z \in [0,\frac{1}{2}]$, $y,x\in (\frac{1}{2},1]$.

\noindent Using the previous example, we see that cases 1) and 2) give us restrictions~(\ref{ex48}) and
\begin{equation}\label{ex49}
\frac{2}{b-1}\leqslant \lambda <\frac{2}{3}.
\end{equation}

Further, without loss of generality consider that $a>b$. Consider inequality~(\ref{e1}) for case 3)
$$
\frac{x}b-\frac{y}a+\frac{x}b-\frac{z}a+\frac{y}a-\frac{z}a\leqslant \lambda\left(x-\frac{x}b+y-\frac{y}a+z-\frac{z}a\right).
$$
Hence,

\begin{equation}\label{ex45}
0\leqslant x\left(\lambda-\frac{\lambda}{b}-\frac{2}{b}\right)
+y\left(\lambda-\frac{\lambda}{a}\right)
+z\left(\lambda-\frac{\lambda}{a}+\frac{2}{a}\right).
\end{equation}

\noindent Consider case 4)
$$
\frac{x}b-\frac{y}b+\frac{x}b-\frac{z}a+\frac{y}b-\frac{z}a\leqslant \lambda\left(x-\frac{x}b+y-\frac{y}b+z-\frac{z}a\right).
$$
Hence,
\begin{equation}\label{ex46}
0\leqslant x\left(\lambda-\frac{\lambda}{b}-\frac{2}{b}\right)
+y\left(\lambda-\frac{\lambda}{b}\right)
+z\left(\lambda-\frac{\lambda}{a}+\frac{2}{a}\right).
\end{equation}
It is clear that for every $0<\lambda <\frac{2}{3}$ there exist sufficiently large $a$ and $b$, $a>b$, such that the inequalities~(\ref{ex48}), (\ref{ex49}), (\ref{ex45}) and~(\ref{ex46}) hold simultaneously. Hence, $T$ is a discontinuous generalized Kannan type mapping.
\end{ex}

\section{The main result}\label{sec3}

Let $T$ be a mapping on the metric space $X$. A point $x\in X$ is called a \emph{periodic point of period $n$} if $T^n(x) = x$. The least positive integer $n$ for which $T^n(x) = x$ is called the prime period of $x$, see, e.g.,~\cite[p.~18]{De22}. In particular, the point $x$ is of prime period $2$ if $T(T(x))=x$ and $Tx\neq x$.

\begin{rem}
Generalized Kannan type mappings can not possess periodic points of prime period three. Indeed, let for some point $x$ we have $Tx=y$,  $y\neq x$, $Ty=z$, $x\neq z\neq y$, and $Tz=x$. Then we have the equality
$$
 d(Tx,Ty)+d(Ty,Tz)+d(Tx,Tz) = d(x,Tx)+d(y,Ty)+d(z,Tz),
$$
which contradicts to~(\ref{e1}).
\end{rem}

The following theorem is the main result of this paper.
\begin{thm}\label{t1}
Let $(X,d)$, $|X|\geqslant 3$, be a complete metric space and let the mapping $T\colon X\to X$ satisfy the following two conditions:
\begin{itemize}
  \item [(i)] $T$ does not possess periodic points of prime period $2$.
  \item [(ii)] $T$ is a generalized Kannan-type mapping on $X$.
\end{itemize}
Then $T$ has a fixed point. The number of fixed points is at most two.
\end{thm}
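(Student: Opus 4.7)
My plan is to mimic the Picard iteration strategy of the classical Kannan theorem, but replacing two-point estimates by three-point ones. I would fix $x_0\in X$ and set $x_{n+1}=Tx_n$. If $x_{n+1}=x_n$ occurs at some stage, then $x_n$ is a fixed point and we are done, so from now on assume $a_n:=d(x_n,x_{n+1})>0$ for every $n$. Under this assumption condition~(i) forces $x_{n+2}=T(Tx_n)\neq x_n$, while $x_n\neq x_{n+1}$ and $x_{n+1}\neq x_{n+2}$ are automatic, so the triple $x_n,x_{n+1},x_{n+2}$ is always pairwise distinct and inequality~(\ref{e1}) applies to it. This is where assumption~(i) earns its keep.

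The heart of the argument will be to extract a geometric contraction from~(\ref{e1}) applied to this triple, which reads
\begin{equation*}
 a_{n+1}+a_{n+2}+d(x_{n+1},x_{n+3})\leqslant \lambda(a_n+a_{n+1}+a_{n+2}).
\end{equation*}
The reverse triangle inequality gives $d(x_{n+1},x_{n+3})\geqslant |a_{n+1}-a_{n+2}|$, so the left-hand side is at least $2\max(a_{n+1},a_{n+2})$. Bounding $a_{n+1}+a_{n+2}\leqslant 2\max(a_{n+1},a_{n+2})$ on the right and rearranging yields
\begin{equation*}
 \max(a_{n+1},a_{n+2})\leqslant q\,a_n,\qquad q:=\frac{\lambda}{2(1-\lambda)}.
\end{equation*}
The hypothesis $\lambda<\tfrac{2}{3}$ is exactly what makes $q<1$; in particular $a_{n+1}\leqslant q\,a_n$, hence $a_n\leqslant q^n a_0$, and a standard telescoping-triangle estimate shows $(x_n)$ is Cauchy, converging by completeness to some $x^*\in X$. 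Spotting this reverse-triangle trick and verifying that it beats the threshold $\tfrac{2}{3}$ is the step I expect to be the main obstacle; once it is found, the remainder follows the familiar Banach/Kannan pattern.

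To check that $x^*$ is fixed I argue by contradiction: suppose $Tx^*\neq x^*$ and set $\delta:=d(x^*,Tx^*)>0$. If $x_n=x^*$ held infinitely often, then $x_{n+1}=Tx^*$ would sit at distance $\delta$ from $x^*$ infinitely often, contradicting $x_n\to x^*$; hence $x_n\neq x^*$ for all large $n$, and~(\ref{e1}) applies to the pairwise distinct triple $x_n,x_{n+1},x^*$. Passing to the limit $n\to\infty$, using $a_n\to 0$ and continuity of the metric, the inequality collapses to $2\delta\leqslant \lambda\delta$, which forces $\delta=0$ since $\lambda<2$.

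Finally, for the cardinality bound, if $T$ admitted three pairwise distinct fixed points $u,v,w$, then~(\ref{e1}) applied to this triple would read $d(u,v)+d(v,w)+d(u,w)\leqslant 0$, which is impossible; hence the fixed-point set has at most two elements.
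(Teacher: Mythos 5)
Your proof is correct, and the key estimate is obtained by a genuinely different (and in fact sharper) route than the paper's. The paper applies~(\ref{e1}) to the triple $x_{n-1},x_n,x_{n+1}$ and then uses the \emph{forward} triangle inequality $d(x_{n+1},x_{n+2})\leqslant d(x_n,x_{n+1})+d(x_n,x_{n+2})$ to absorb the cross term $d(x_n,x_{n+2})$, arriving at the two-step recursion $a_{n+2}\leqslant \frac{2\lambda}{2-\lambda}\max\{a_n,a_{n+1}\}$; this forces the slightly awkward interleaved bounds $a_n\leqslant\alpha^{\frac{n}{2}-1}a$ before the Cauchy estimate. You instead keep the cross term $d(x_{n+1},x_{n+3})$ and bound it from \emph{below} by $|a_{n+1}-a_{n+2}|$ via the reverse triangle inequality, so that $u+v+|u-v|=2\max\{u,v\}$ turns~(\ref{e1}) into the clean one-step contraction $a_{n+1}\leqslant \frac{\lambda}{2(1-\lambda)}\,a_n$; both constants cross the value $1$ exactly at $\lambda=\frac23$, so the threshold is the same, but your recursion gives a plainly geometric $a_n\leqslant q^n a_0$ and a simpler Cauchy argument. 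For the fixed-point verification the two arguments are close in spirit (both apply~(\ref{e1}) to $x^*$ together with two consecutive iterates and pass to the limit), but you are more careful than the paper on one point: you explicitly rule out $x_n=x^*$ occurring infinitely often so that the triple $x_n,x_{n+1},x^*$ is genuinely pairwise distinct and~(\ref{e1}) is applicable, whereas the paper invokes the inequality for $x_{n-1},x_n,x^*$ without checking distinctness. The at-most-two-fixed-points argument is identical to the paper's.
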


\begin{proof}
Let $x_0\in X$, $Tx_0=x_1$, $Tx_1=x_2$, \ldots, $Tx_n=x_{n+1}$, \ldots. Suppose that $x_n$ is not a fixed point of the mapping $T$ for every $n=0,1,...$.
Since $x_{n-1}$ is not fixed, then $x_{n-1}\neq x_n=Tx_{n-1}$. By condition (i) $x_{n+1}=T(T(x_{n-1}))\neq x_{n-1}$ and by the supposition that $x_{n}$ is not fixed we have $x_n\neq x_{n+1}=Tx_n$. Hence, $x_{n-1}$, $x_n$ and $x_{n+1}$ are pairwise distinct.
Let us set in~(\ref{e1}) $x=x_{n-1}$, $y=x_n$, $z=x_{n+1}$.
Then
$$
d(Tx_{n-1},Tx_n)+d(Tx_n,T_{x_{n+1}})+d(Tx_{n-1}, Tx_{n+1})
$$
$$
\leqslant
\lambda(d(x_{n-1},Tx_{n-1})+d(x_n,Tx_n)+d(x_{n+1}, Tx_{n+1}))
$$
and
$$
d(x_{n},x_{n+1})+d(x_{n+1},x_{n+2})+d(x_{n+2}, x_{n})
$$
$$
\leqslant
\lambda(d(x_{n-1},x_{n})+d(x_n,x_{n+1})+d(x_{n+1}, x_{n+2})).
$$
Hence,
$$
(1-\lambda)d(x_{n+1},x_{n+2})\leqslant \lambda(d(x_{n-1},x_n)+d(x_n,x_{n+1}))-d(x_n,x_{n+1})-d(x_{n+2},x_n).
$$
Using the triangle inequality $d(x_{n+1}, x_{n+2})\leqslant d(x_n, x_{n+1})+d(x_{n+2},x_n)$ we get
$$
(1-\lambda)d(x_{n+1},x_{n+2})
\leqslant \lambda (d(x_{n-1},x_n)+d(x_n,x_{n+1}))-d(x_{n+1},x_{n+2}).
$$
Further,
$$
(2-\lambda)d(x_{n+1},x_{n+2})\leqslant \lambda (d(x_{n-1},x_n)+d(x_n,x_{n+1})),
$$
$$
d(x_{n+1},x_{n+2})\leqslant \frac{\lambda}{2-\lambda} (d(x_{n-1}, x_{n})+d(x_{n}, x_{n+1}))
$$
and
$$
d(x_{n+1},x_{n+2})\leqslant \frac{2\lambda}{2-\lambda} \max\{d(x_{n-1}, x_{n}),d(x_{n}, x_{n+1})\}.
$$
Let $\alpha = \frac{2\lambda}{2-\lambda}$. Using the relation $\lambda \in [0,\frac{2}{3})$, we get $\alpha \in [0,1)$. Further,
\begin{equation}\label{e2}
d(x_{n+1},x_{n+2})\leqslant \alpha \max\{d(x_{n-1}, x_{n}),d(x_{n}, x_{n+1})\}.
\end{equation}

Set $a_n=d(x_{n-1},x_n)$, $n=1,2,\ldots,$ and let
$a=\max\{a_{1},a_{2}\}$.
Hence and by~(\ref{e2}) we obtain
$$
a_1\leqslant a, \, \,
a_2\leqslant a, \, \,
a_3\leqslant \alpha a, \, \,
a_4\leqslant \alpha a, \, \,
a_5\leqslant \alpha^2 a, \, \,
a_6\leqslant \alpha^2 a, \,\,
a_7\leqslant \alpha^3 a, \,
\ldots .
$$
Since $\alpha <1$, it is clear that the inequalities
$$
a_1\leqslant a, \, \,
a_2\leqslant a, \, \,
a_3\leqslant \alpha^{\frac{1}{2}} a, \, \,
a_4\leqslant \alpha a, \, \,
a_5\leqslant \alpha^{\frac{3}{2}} a, \, \,
a_6\leqslant \alpha^2 a, \,\,
a_7\leqslant \alpha^{\frac{5}{2}} a, \,
\ldots
$$
also hold. I.e.,
\begin{equation}\label{e9}
a_n\leqslant \alpha^{\frac{n}{2}-1}a
\end{equation}
for $n=3,4,\ldots$.

Let $p\in \mathbb N$, $p\geqslant 2$. By the triangle inequality, for $n\geqslant 3$ we have
$$
d(x_n,\,x_{n+p})\leqslant d(x_{n},\,x_{n+1})+d(x_{n+1},\,x_{n+2})+\ldots+d(x_{n+p-1},\,x_{n+p})
$$
$$
=a_{n+1}+a_{n+2}+\cdots+a_{n+p} \leqslant
a(\alpha^{\frac{n+1}{2}-1}+\alpha^{\frac{n+2}{2}-1}+\cdots
+\alpha^{\frac{n+p}{2}-1})
$$
$$
=a\alpha^{\frac{n+1}{2}-1}(1+\alpha^{\frac{1}{2}}+\cdots
+\alpha^{\frac{p-1}{2}})=a\alpha^{\frac{n-1}{2}}\frac{1-\sqrt{\alpha^p}}{1-\sqrt{\alpha}}.
$$
Since by the supposition $0\leqslant\alpha<1$, then $0\leqslant \sqrt{\alpha^p}<1$ and $d(x_n,\,x_{n+p})\leqslant a\alpha^{\frac{n-1}{2}}\frac{1}{1-\sqrt{\alpha}}$. Hence, $d(x_n,\,x_{n+p})\to 0$ as $n\to \infty$ for every $p>0$. Thus, $\{x_n\}$ is a Cauchy sequence. By the completeness of $(X,d)$, this sequence has a limit $x^*\in X$.

Recall that any three consecutive element of the sequence $(x_n)$ are pairwise distinct. If $x^*\neq x_k$ for all $k\in \{1,2,...\}$, then inequality~(\ref{e1}) holds for the pairwise distinct points  $x^*$, $x_{n-1}$ and $x_n$.
Suppose that there exists the smallest possible $k\in \{1,2,...\}$ such that $x^*=x_k$.  Let $m>k$ be such that $x^*=x_m$, then the sequence $(x_n)$ is cyclic starting from $k$ and can not be a Cauchy sequence. Hence, the points $x^*$, $x_{n-1}$ and $x_n$ are pairwise distinct at least when $n-1>k$.

Let us prove that $Tx^*=x^*$. If there exists $k\in \{1,2,...\}$ such that $x_k=x^*$, then suppose that $n-1>k$. By the triangle inequality and by inequality~(\ref{e1}) we have
$$
d(x^*,Tx^*)\leqslant d(x^*,x_{n})+d(x_{n},Tx^*)
=d(x^*,x_{n})+d(Tx_{n-1},Tx^*)
$$
$$
\leqslant d(x^*,x_{n})+d(Tx_{n-1},Tx^*)+d(Tx_{n-1},Tx_{n})+d(Tx_{n},Tx^*)
$$
$$
\leqslant d(x^*,x_{n})+\lambda(d(x_{n-1},Tx_{n-1})
+d(x_{n},Tx_{n})+d(x^*,Tx^*)).
$$
Hence,
$$
d(x^*,Tx^*)(1-\lambda)\leqslant d(x^*,x_{n})+\lambda(d(x_{n-1},x_{n})
+d(x_{n},x_{n+1}))
$$
and

\begin{equation}\label{e50}
d(x^*,Tx^*)\leqslant
\frac{1}{1-\lambda}(d(x^*,x_{n})+\lambda(d(x_{n-1},x_{n})
+d(x_{n},x_{n+1}))).
\end{equation}

Since all the distances in the right side tend to zero as $n\to \infty$, we obtain $d(x^*,Tx^*)=0$.

Suppose that there exists at least three pairwise distinct fixed points $x$, $y$ and $z$.  Then $Tx=x$, $Ty=y$ and $Tz=z$, which contradicts to~(\ref{e1}).
\end{proof}

\begin{rem}\label{rem1}
Theorem~\ref{t1} does not hold for $\lambda> \frac{2}{3}$. Indeed, suppose that $\lambda > \frac{2}{3}$.
Let $X=\{x,y,z,t\}$,
$d(x,y)=d(y,z)=d(z,t)=d(t,x)=a$,
$d(x,z)=d(y,t)=\varepsilon$. It is easy to see that $(X,d)$ is a metric space for all $0<\ve\leqslant 2a$.
Define $T\colon X\to X$ as
$Tx=y$,
$Ty=z$,
$Tz=t$ and
$Tt=x$.
Consider inequality~(\ref{e1}) for the triplet of points $\{x,y,z\}$:
$$
a+a+\ve\leqslant \lambda (a+a+a).
$$
Hence,
\begin{equation}\label{el}
\frac{2a+\ve}{3a}\leqslant \lambda.
\end{equation}
Clearly, for every $\lambda> \frac{2}{3}$ there exists sufficiently small $\ve$ such that inequality~(\ref{el}) holds.
Note that considering inequality~(\ref{e1}) for the triplets of points $\{y,z,t\}$ $\{z,t,x\}$ $\{t,x,y\}$ we obtain the same inequality~(\ref{el}). Thus, for every $\lambda> \frac{2}{3}$ there exists a complete metric space $X$ with $T\colon X\to X$ such that: 1) inequality~(\ref{e1}) holds with the coefficient $\lambda$ for all pairwise distinct triplets of points of the space $X$; 2) $T$ does not possess periodic points of prime period two; 3) $T$ does not have any fixed point.
\end{rem}

\noindent\textbf{Open problem.} Does Theorem~\ref{t1} hold for $\lambda=\frac{2}{3}$?

\begin{rem}
Suppose that under the supposition of the theorem the mapping $T$ has a fixed point $x^*$ which is a limit of some iteration sequence $x_0, x_1=Tx_0, x_2=Tx_1,\ldots$ such that $x_n\neq x^*$ for all $n=1,2,\ldots$. Then $x^*$ is a unique fixed point.
Indeed, suppose that $T$ has another fixed point $x^{**}\neq x^*$.
It is clear that $x_n\neq x^{**}$ for all $n=1,2,\ldots$. Hence, we have that the points $x^*$, $x^{**}$ and $x_n$ are pairwise distinct for all $n=1,2,\ldots$. Consider the ratio
$$
R_n=\frac{d(Tx^*,Tx^{**})+d(Tx^*,Tx_{n})+d(Tx^{**},Tx_{n})}
{d(x^*,Tx^{*})+d(x_{n},Tx_{n})+d(x^{**},Tx^{**})}
$$
$$
=\frac{d(x^*,x^{**})+d(x^*,x_{n+1})+d(x^{**},x_{n+1})}
{d(x_n,x_{n+1})}.
$$
Taking into consideration that $d(x^*,x_{n+1})\to 0$, $d(x^{**},x_{n+1})\to d(x^{**},x^*)$ and $d(x_n,x_{n+1})\to 0$, we obtain
$R_n\to \infty$ as $n\to \infty$, which contradicts to condition~(\ref{e1}).
\end{rem}

\begin{ex}\label{exa1}
Let us construct an example of generalized Kannan-type mapping $T$, which has exactly two fixed points.
Let $X=\{x,y,z\}$,
$d(x,y)=1$,
$d(y,z)=4$,
$d(x,z)=4$, and let $T\colon X\to X$ be such that $Tx= x$, $Ty= y$ and $Tz= x$. One can easily see that condition (i) of Theorem~\ref{t1} is fulfilled and inequality~(\ref{e1}) holds with $\lambda= \frac{1}{2}$. Note also that $T$ is not a Kannan type mapping since inequality~(\ref{e0}) does not hold for any $0\leqslant \lambda <\frac{1}{2}$.
\end{ex}

\begin{ex}\label{exa2}
Let us show that condition (i) of Theorem~\ref{t1} is necessary. Let the space $(X,d)$ be like in the previous example and let $T\colon X\to X$ be such that $Tx=y$, $Ty=x$ and $Tz=x$. One can easily see that inequality~(\ref{e1}) is fulfilled with any $\frac{1}{3}\leqslant \lambda <\frac{2}{3} $ but $T$ does not have any fixed point.
\end{ex}

It is well-known that Kannan type mappings are continuous at fixed points~\cite{Rh88}. The following proposition shows that generalized Kannan type mappings also have this property.

\begin{prop}
Generalized Kannan type mapping are continuous at fixed points.
\end{prop}

\begin{proof}
Let $(X,d)$ be a metric space with $|X|\geqslant 3$, $T\colon X\to X$ be a \emph{generalized Kannan type mapping} and $x^*$ be a fixed point of $T$. Let $(x_n)$ be a sequence such that $x_n\to x^*$, $x_{n}\neq x_{n+1}$ and $x_{n}\neq x^*$ for all $n$. Let us show that $Tx_n\to Tx^*$.
By~(\ref{e1}) we have
  \begin{multline*}
   d(Tx^*,Tx_n)+d(Tx_n,Tx_{n+1})+d(Tx_{n+1},Tx^*) \\ \leqslant \lambda (d(x^*,Tx^*)+d(x_n,Tx_n)+d(x_{n+1},Tx_{n+1})).
  \end{multline*}
Hence,
  \begin{equation*}
   d(Tx^*,Tx_n)+d(Tx_{n+1},Tx^*) \leqslant \lambda (d(x_n,Tx_n)+d(x_{n+1},Tx_{n+1})).
  \end{equation*}
By the triangle inequality we have
  \begin{multline*}
   d(Tx^*,Tx_n)+d(Tx_{n+1},Tx^*) \\ \leqslant \lambda (d(x_n,x^*)+d(x^*,Tx_n)+d(x_{n+1},x^*)+d(x^*,Tx_{n+1})).
  \end{multline*}
Further,
 $$
   d(Tx^*,Tx_n)+d(Tx_{n+1},Tx^*) \leqslant \frac{\lambda}{1-\lambda}(d(x_n,x^*)+d(x_{n+1},x^*)).
$$
Since $d(x_n,x^*)\to 0$ and $d(x_{n+1},x^*)\to 0$ we have $$d(Tx^*,Tx_n)+d(Tx_{n+1},Tx^*) \to 0$$ and, hence, $d(Tx^*,Tx_n) \to 0$.

Let now $(x_n)$ be a sequence such that $x_n\to x^*$, and
$x_{n}\neq x^*$ for all $n$, but $x_{n} = x_{n+1}$ is possible. Let $(x_{n_k})$ be a subsequence of $(x_n)$ obtained by deleting corresponding repeating elements of $(x_n)$, i.e.,  such that $x_{n_k}\neq x_{n_{k+1}}$ for all $k$. It is clear that $x_{n_k}\to x^*$. As was just proved $Tx_{n_k}\to Tx^*=x^*$. The difference between $Tx_{n_k}$ and $Tx_{n}$ is that $Tx_{n}$ can be obtained from $Tx_{n_k}$ by inserting corresponding repeating consecutive elements. Hence, it is easy to see that $Tx_{n}\to Tx^*$.

Let $(x_n)$ be a sequence such that $x_n=x^*$ for all $n>N$, where $N$ is some natural number. Then, clearly, $Tx_{n}\to Tx^*$.
Let $(x_n)$ now be an arbitrary sequence such that $x_n\to x^*$ but not like in the previous case. Consider a subsequence  $(x_{n_k})$ obtained from $(x_n)$ by deleting elements $x^*$ (if they exist). Clearly, $x_{n_k}\to x^*$. It was just shown that such $Tx_{n_k}\to Tx^*$. Again, we see that $Tx_n$ can be obtained from $Tx_{n_k}$ by inserting in some places elements $Tx^*=x^*$. Again, it is easy to see that $Tx_{n}\to Tx^*$.
\end{proof}

\section{Asymptotic regularity}\label{sec4}

The concept of asymptotic regularity allows us to extend the class of mappings for which the fixed point theorems hold.
\begin{defn}
Let $(X, d)$ be a metric space. A mapping $T\colon X \to X$ satisfying the condition
\begin{equation}\label{ar}
\lim_{n \to \infty} d(T^{n+1}(x), T^n(x)) = 0
\end{equation}
for all $x \in X$ is called asymptotically regular \cite{BP66}.
\end{defn}

\begin{rem}\label{r32}
Let $(X, d)$ be a metric space $T\colon X \to X$ be a self-mapping and let $x_0\in X$,  $T(x_0) = x_1$, $T(x_1) = x_2$, and so on.
If $T$ is asymptotically regular and the sequence $(x_n)$ does not possess a fixed point of $T$, then all the points $x_i$, $i\geqslant 0$, are pairwise distinct. Indeed, otherwise the sequence $(x_n)$ is cyclic starting from some number and condition~(\ref{ar}) does not hold.
\end{rem}

\begin{thm} \label{gkt1m}
Let $(X, d)$ be a complete metric space with $|X| \geqslant 3$ and let the mapping
$T\colon X \to X$ be asymptotically regular generalized Kannan type mapping with the coefficient $\lambda \in [0,1)$. Then $T$ has a fixed point. The number of fixed points is at most two.
\end{thm}

\begin{proof}
\textbf{(i)} Let $x_0 \in X$, $T(x_0) = x_1$, $T(x_1) = x_2$, and so on. Suppose that $(x_n)$ does not possess a fixed point of $T$. Let us prove that $(x_n)$ is a Cauchy sequence. It is sufficient to show that $d(x_n,x_{n+p})\to 0$ as $n\to \infty$ for all $p>0$.
If $p=1$, then this follows from the definition of asymptotic regularity. Let $p\geqslant 2$. By Remark~\ref{r32} the points $x_n$, $x_{n+p-1}$, $x_{n+p}$ are pairwise distinct. Using repeated triangle inequality, inequality~(\ref{e1}) and asymptotic regularity, we get
\begin{align*}
& {} d(x_n,x_{n+p})\leqslant d(x_n,x_{n+1})+d(x_{n+1},x_{n+p+1})+d(x_{n+p+1},x_{n+p})\\\leqslant & {}
d(x_n,x_{n+1})+d(x_{n+1},x_{n+p+1})+d(x_{n+p+1},x_{n+p})+d(x_{n+1},x_{n+p})\\\leqslant & {}
d(x_n,x_{n+1})+\lambda(d(x_{n},Tx_{n})+d(x_{n+p},Tx_{n+p})+d(x_{n+p-1},Tx_{n+p-1}))\\= & {}
d(x_n,x_{n+1})+\lambda(d(x_{n},x_{n+1})+d(x_{n+p},x_{n+p+1})+d(x_{n+p-1},x_{n+p})) \to 0
\end{align*}
as $n\to \infty$. Thus, $(x_{n})$ is a Cauchy sequence. The rest of the proof follows from the proof given in Theorem~\ref{t1}, see~(\ref{e50}).
\end{proof}

Below we show that the assumption of continuity for the mappings $T$ allows us to obtain fixed point theorems for more general classes of mappings than generalized Kannan type mappings even with the coefficient $\lambda\in [0,1)$.

We now provide a more general version of generalized Kannan type mappings and introduce the following definitions. First, we define the class $\mathcal{F}$ of functions $F\colon \mathbb{R}^+ \times \mathbb{R}^+ \times \mathbb{R}^+ \rightarrow \mathbb{R}^+$ satisfying the following conditions:

\begin{enumerate}
    \item[(i)] $F(0, 0, 0) = 0$;
    \item[(ii)] $F$ is continuous at $(0, 0, 0)$.
\end{enumerate}

\begin{defn}\label{d3}
Let $(X,d)$ be a metric space with $|X|\geqslant 3$. We shall say that $T\colon X\to X$ is a \emph{generalized $F$-Kannan type mapping} on $X$ if there exists $F \in \mathcal{F}$ such that the inequality
  \begin{equation}\label{e3}
   d(Tx,Ty)+d(Ty,Tz)+d(Tx,Tz) \leqslant F(d(x,Tx),d(y,Ty), d(z,Tz))
  \end{equation}
  holds for all three pairwise distinct points $x,y,z \in X$.
\end{defn}

\begin{thm} \label{gkt2m}
Let $(X, d)$ be a complete metric space with $|X| \geqslant 3$ and let the mapping
$T\colon X \to X$ be a continuous, asymptotically regular generalized $F$-Kannan type mapping. Then $T$ has a fixed point. The number of fixed points is at most two.
\end{thm}

\begin{proof}
Let $x_0 \in X$, $T(x_0) = x_1$, $T(x_1) = x_2$, and so on. Suppose that $(x_n)$ does not possess a fixed point of $T$. Let us prove that $(x_n)$ is a Cauchy sequence. It is sufficient to show that $d(x_n,x_{n+p})\to 0$ as $n\to \infty$ for all $p>0$.
If $p=1$, then this follows from the definition of asymptotic regularity. Let $p\geqslant 2$. By Remark~\ref{r32} the points $x_n$, $x_{n+p-1}$, $x_{n+p}$ are pairwise distinct. Using repeated triangle inequality, inequality~(\ref{e3}) and asymptotic regularity, we get
\begin{align*}
& {} d(x_n,x_{n+p})\leqslant d(x_n,x_{n+1})+d(x_{n+1},x_{n+p+1})+d(x_{n+p+1},x_{n+p})\\\leqslant & {}
d(x_n,x_{n+1})+d(x_{n+1},x_{n+p+1})+d(x_{n+p+1},x_{n+p})+d(x_{n+1},x_{n+p})\\\leqslant & {}
d(x_n,x_{n+1})+F(d(x_{n},Tx_{n}),d(x_{n+p},Tx_{n+p}), d(x_{n+p-1},Tx_{n+p-1}))\\= & {}
d(x_n,x_{n+1})+ F(d(x_{n},x_{n+1}),d(x_{n+p},x_{n+p+1}), d(x_{n+p-1},x_{n+p})) \to 0
\end{align*}
as $n\to \infty$. Thus, $(x_{n})$ is a Cauchy sequence.  By the completeness of $(X, d)$, this sequence has a limit $x^* \in X$. Now
$$
d(Tx^*,x^*)\leqslant d(Tx^*,x_n)+d(x_n,x^*)= d(Tx^*,Tx_{n-1})+d(x_n,x^*)
$$
Since $T$ is continuous letting $n\to \infty$, we obtain $Tx^*=x^*$.
The rest of the proof follows from reasoning similar to the last paragraph of the proof of Theorem~\ref{t1}.
\end{proof}

Let $\mathcal{B}$ represent the class of functions $\beta: [0,\infty) \to [0,\infty)$ that meet the following condition: $\limsup\limits_{t\rightarrow 0}\beta(t) <\infty$.

\begin{defn}\label{d4}
Let $(X,d)$ be a metric space with $|X|\geqslant 3$. We shall say that $T\colon X\to X$ is a \emph{generalized $\mathcal{B}$-Kannan type mapping} on $X$ if there exist $\beta_1, \beta_2, \beta_3 \in \mathcal{B}$ such that the inequality
\begin{equation}\label{e4}
   \begin{aligned}
      &d(Tx,Ty) + d(Ty,Tz) + d(Tx,Tz) \\
      &\leq \beta_1(d(x,Tx))d(x,Tx) + \beta_2(d(y,Ty))d(y,Ty) + \beta_3(d(z,Tz))d(z,Tz)
   \end{aligned}
\end{equation}
holds for all three pairwise distinct points $x,y,z \in X$.
\end{defn}

\begin{cor}
Let $(X, d)$ be a complete metric space with $|X| \geqslant 3$ and let the mapping
$T\colon X \to X$ be a continuous, asymptotically regular generalized $\mathcal{B}$-Kannan type mapping. Then $T$ has a fixed point. Moreover, the number of fixed points is at most two.
\end{cor}
\begin{proof}
Set $F(x,y,z)=\beta_1(x)x+\beta_2(x)x+\beta_3(x)$. Then  $F(0,0,0)=0$ and $\lim\limits_{x, y, z \to 0} F(x,y,z)=0$ since  $\limsup\limits_{t\rightarrow 0} \beta_i(t) <\infty$ for $i=1,2,3$. Hence, this assertion follows from Theorem~\ref{gkt2m}.
\end{proof}


By setting $\beta_1(t)=\beta_2(t)=\beta_3(t)= \lambda \geqslant 0$ in (\ref{e4}), we get a generalized Kannan type mapping with the coefficient $\lambda \in [0,\infty)$. Hence, we immediately obtain the following.

\begin{cor}\label{cgkt2m}
Let $(X, d)$ be a complete metric space with $|X| \geqslant 3$ and let the mapping
$T\colon X \to X$ be a continuous asymptotically regular generalized Kannan type mapping with the coefficient $\lambda \in [0,\infty)$. Then $T$ has a fixed point. The number of fixed points is at most two.
\end{cor}

\begin{rem}
 It is pertinent to mention that the continuity requirement for the mapping $T$ in Corollary~\ref{cgkt2m}  can be further relaxed. Corollary~\ref{cgkt2m} holds when we replace the continuity of the mapping $T$ with orbital continuity or $x_0$-orbitally continuity or almost orbitally continuity, or weakly orbitally continuity or $T$-orbitally lower semi-continuity or $k$-continuity. For more information on weaker continuity notions, see \cite{B23}.
\end{rem}

\begin{ex}
Let $X = [0, 1]$ and let $d$ be the Euclidean distance on $X$. Consider a mapping $T\colon X \to X$ such that  $T(x) = x/a$ for some real $a > 1$.
Example~\ref{exa3} it was shown that inequality~(\ref{e1}) holds for all $x > y > z$ iff $\lambda\geqslant 2/(a-1)$. Hence, if $a \in (3, \infty) $ ($a \in (1,\infty)$),  then $T$ is a generalized Kannan type mapping with some coefficient $\lambda \in (0,1)$ ($\lambda \in (0,\infty)$). Thus, contractions $T(x) = \alpha x$ with $\alpha \in (0, \frac{1}{3}) $ ($\alpha \in (0,1)$) are generalized Kannan type mapping with some coefficient $\lambda \in (0,1)$ ($\lambda \in (0,\infty)$).
\end{ex}

The following proposition is almost evident.
\begin{prop}\label{prop1}
Let $X$ be a finite nonempty metric space and let $T\colon X\to X$ be a self-mapping. Then $T$ is asymptotically regular if and only if $T$ does not possess periodic points of prime period $n\geqslant 2$.
\end{prop}

Proposition~\ref{prop1} and Corollary~\ref{cgkt2m} give the following.

\begin{cor}
Let $X$ be a finite nonempty metric space and let $T\colon X\to X$ be a self-mapping. If $T$ does not possess periodic points of prime period $n\geqslant 2$, then $T$ has a fixed point.
\end{cor}
\begin{proof}
It clear that $X$ is complete, $T$ is continuous and by Proposition~\ref{prop1} asymptotically regular. Suppose that $T$ has no fixed points. Hence, $d(x,Tx)+d(y,Ty)+d(z,Tz)\neq 0$ for all $x,y,z \in X$. It is easy to see that $T$ is a generalized Kannan type mapping with the coefficient
$$
\lambda =  \max_{x,y,z \in X}\frac{d(Tx,Ty)+d(Ty,Tz)+d(Tx,Tz)}{d(x,Tx)+d(y,Ty)+d(z,Tz)},
$$
where maximum is taken over all  pairwise distinct triplets of points $x,y,z \in X$.
Applying Corollary~\ref{cgkt2m} we get a contradiction.
\end{proof}

Note that a fixed point is a periodic point with the period $n=1$. Hence, we immediately obtain the following.

\begin{cor}
Let $X$ be a finite nonempty metric space and let $T\colon X\to X$ be a self-mapping. Then $T$  has a periodic point.
\end{cor}
This assertion seems already known and has also an elementary proof.
\begin{proof}
Let $x_0\in X$. Consider the sequence $x_n=T(x_{n-1})$, $n\geqslant 1$. By the pigeonhole principle, there exists at least one $n$, $1\leqslant n \leqslant |X|$, such that $x_n$ is equal to some earlier term in this sequence. Let $k$ be the smallest such index and let $i<k$ be such that $x_i=x_k$. Then $x_i$ is a periodic point with the prime period $k-i$.
\end{proof}

\section{Approximate fixed-point sequence and generalized $F$-Kannan type mapping}
Let $(X, d)$ be a metric space and $T\colon X \to X$.  Then a sequence $\{x_n\} \subset X$ is said to be an \emph{approximate fixed point sequence} of $T$ if  $d(x_n, Tx_n) \to 0$ as $n \to \infty$.

In the following example, we observe that while $T$ does not satisfy the asymptotic regularity condition, it does possess an approximate fixed-point sequence.

\begin{ex}
Let $X = [0, 1]$ be endowed with the usual metric, and $T : X \to X$ be defined by $T(x) = 1 - x$ for all $x \in [0, 1]$. It is important to note that $T$ is not asymptotically regular since every point of $X$ is a periodic point of prime period two, except the point $x=\frac{1}{2}$, which is the fixed point of $T$. However, the sequence $x_n=\frac{1}{2}+\frac{1}{n}$, $n\geqslant 2$ is the approximate fixed-point sequence of $T$.
Indeed,
$$
d(x_n,Tx_n)=\left|\frac{1}{2}+\frac{1}{n}-\left(1-\left(\frac{1}{2}+\frac{1}{n}\right)\right)\right|
=\frac{2}{n}\to 0
$$
as $n \to \infty$.
\end{ex}

In the following discussion, we examine mappings that may not exhibit asymptotic regularity but nonetheless fulfill the condition of generalized $F$-Kannan type mapping.

\begin{thm} \label{gkt5m}
Let $(X, d)$ be a complete metric space with $|X| \geqslant 3$ and let the mapping
$T\colon X \to X$ be a continuous, generalized $F$-Kannan type mapping. Suppose that $T$ has an approximate fixed-point sequence, i.e., there exists a sequence $\{x_n\} \subset X$ such that $d(x_n, Tx_n) \to 0$ as $n \to \infty$. Then, $T$ has a  fixed point. The number of fixed points is at most two.
\end{thm}

\begin{proof}

Using the triangle inequality and the definition of generalized $F$-Kannan type mapping, we obtain
\begin{align*}
    d(x_n, x_{n+p}) &\leq d(x_n, Tx_n) + d(Tx_n, Tx_{n+p}) + d(T x_{n+p}, x_{n+p}) \\
    &\leq d(x_n, T x_n) + d(Tx_n, Tx_{n+p}) + d(Tx_{n+p}, Tx_{n+p-1}) \\
    &+ d(Tx_{n+p-1}, Tx_{n})+ d(Tx_{n+p}, x_{n+p})\\
    & \leq d(x_n, Tx_n) + F(d(x_{n},Tx_n),d(x_{n+p},Tx_{n+p}), d(x_{n+p-1},Tx_{n+p-1})) \\
    & +d(Tx_{n+p}, x_{n+p}),
\end{align*}
which implies $d(x_n, x_{n+p}) \to 0$ as $n\to \infty$. Thus, $(x_{n})$ is a Cauchy sequence. Rest of the proof follows easily.
\end{proof}

\section{Fixed-point theorems in incomplete metric spaces}\label{sec5}

The following theorem is an analogue of Theorem 1 from~\cite{Ka69}. Note that in this theorem we omit the completeness of the metric space and have two new conditions (iii) and (iv).

\begin{thm}\label{t2}
Let $(X,d)$, $|X|\geqslant 3$, be a metric space and let the mapping $T\colon X\to X$ satisfy the following four conditions:
\begin{itemize}
  \item [(i)] $T$ does not possess periodic points of prime period $2$.
  \item [(ii)] $T$ is a generalized Kannan type mapping on $X$.
  \item [(iii)] $T$ is continuous at $x^*\in X$.
  \item [(iv)] There exists a point $x_0\in X$ such that the sequence of iterates $x_n=Tx_{n-1}$, $n=1,2,...$, has a subsequence $x_{n_k}$, converging to $x^*$.
\end{itemize}
Then $x^*$ is a fixed point of $T$. The number of fixed points is at most two.
\end{thm}
\begin{proof}
Since $T$ is continuous at $x^*$  and $x_{n_k} \to x^*$ we have $Tx_{n_k}=x_{n_{k}+1} \to Tx^*$. Note that $x_{n_{k}+1}$ is a subsequence of $x_n$ but not obligatory the subsequence of $x_{n_k}$. Suppose $x^*\neq Tx^*$. Consider two balls $B_1=B_1(x^*,r)$ and $B_2=B_2(Tx^*,r)$, where $r<\frac{1}{3}d(x^*, Tx^*)$. Consequently, there exists a positive integer $N$ such that $i>N$ implies
$$
x_{n_i} \in B_1 \, \text{ and } \, x_{n_i+1} \in B_2.
$$
Hence,
\begin{equation}\label{w4}
d(x_{n_i}, x_{n_i+1})>r
\end{equation}
for $i>N$.

If the sequence $x_n$ does not contain a fixed point of the mapping $T$, then we can apply considerations of Theorem~\ref{t1}.
By~(\ref{e9}) for $n=3,4,\ldots$ we have
$$
d(x_{n-1},x_n)\leqslant \alpha^{\frac{n}{2}-1}a,
$$
where $a=\max\{d(x_{0},x_{1}),d(x_{1},x_{2})\}$ and $\alpha=2\lambda/(2-\lambda)\in [0,1)$.
Hence,
$$
d(x_{n_i},x_{n_i+1})\leqslant \alpha^{\frac{n_i+1}{2}-1}a.
$$
But the last expression approaches $0$ as $i\to \infty$ which contradicts to~(\ref{w4}). Hence, $Tx^*=x^*$.

The existence of at most two fixed points follows form the last paragraph of Theorem~\ref{t1}.
\end{proof}

In the following theorem we suppose that $T$ is a generalized Kannan type mapping not on $X$ but on everywhere dense subset of $X$ and suppose that $f$ is continuous on $X$ but not only at the point $x^*$, compare with Theorem 2 from~\cite{Ka69}.

\begin{thm}\label{t3}
Let $(X,d)$, $|X|\geqslant 3$, be a metric space and let the mapping $T\colon X\to X$ be continuous. Suppose that
\begin{itemize}
  \item [(i)] $T$ does not possess periodic points of prime period $2$.
  \item [(ii)] $T$ is a generalized Kannan type mapping on $(M,d)$, where $M$ is an everywhere dense subset of $X$.
  \item [(iii)] There exists a point $x_0\in X$ such that the sequence of iterates $x_n=Tx_{n-1}$, $n=1,2,...$, has a subsequence $x_{n_k}$, converging to $x^*$.
\end{itemize}
Then $x^*$ is a fixed point of $T$. The number of fixed points is at most two.
\end{thm}
\begin{proof}
The proof will follow from Theorem~\ref{t2}, if we can show that $T$ is a generalized Kannan type mapping on $X$. Let $x, y, z$ be any three pairwise distinct points of $X$ such that  $x, y \in M$, $z\in X \setminus M$ and  let $(c_n)$ be a sequence in $M$ such that $c_n\to z$, $c_n\neq x$, $c_n\neq y$ for all $n$ and $c_i\neq c_j$, $i\neq j$. Then
$$
 d(Tx,Ty)+d(Ty,Tz)+d(Tx,Tz) \leqslant
$$
$$
d(Tx,Ty)+d(Ty,Tc_n)+d(Tc_n,Tz)+d(Tx,Tc_n)+d(Tc_n,Tz)
$$
$$
\leqslant \lambda(d(x,Tx)+d(y,Ty)+d(c_n,Tc_n))+2d(Tc_n,Tz)
$$
(using the inequality
\begin{equation}\label{s1}
  d(c_n,Tc_n)\leqslant d(c_n,z)+d(z,Tz)+d(Tz,Tc_n),
\end{equation}
we get)
$$
\leqslant \lambda(d(x,Tx)+d(y,Ty)+d(z,Tz))+\lambda d(c_n,z)+\lambda d(Tz,Tc_n)+2d(Tc_n,Tz).
$$
Letting $n\to \infty $ we get $d(c_n,z)\to 0$ and $d(Tc_n,Tz)\to 0$. Hence, inequality~(\ref{e1}) follows.

Let now  $x \in M$, $y, z\in X \setminus M$, and let $(b_n), (c_n)$ be sequences in $M$ such that $b_n\to y$ and $c_n\to z$. (Here and below we consider that the points $x, y, z$ and all elements of sequences converging to these points are pairwise distinct.) Then
$$
 d(Tx,Ty)+d(Ty,Tz)+d(Tx,Tz) \leqslant d(Tx,Tb_n)+d(Tb_n,Ty)
$$
$$
+d(Ty,Tb_n)+d(Tb_n,Tc_n)+d(Tc_n,Tz)+
d(Tx,Tc_n)+d(Tc_n,Tz)
$$
$$
\leqslant \lambda(d(x,Tx)+d(b_n,Tb_n)+d(c_n,Tc_n))+2d(Tb_n,Ty)
+2d(Tc_n,Tz)\leqslant
$$
(using the inequality
\begin{equation}\label{s2}
  d(b_n,Tb_n)\leqslant d(b_n,y)+d(y,Ty)+d(Ty,Tb_n)
\end{equation}
and inequality~(\ref{s1}) we get)
$$
\leqslant \lambda(d(x,Tx)+d(y,Ty)+d(z,Tz))
+2d(Tb_n,Ty)+2d(Tc_n,Tz)
$$
$$
+\lambda(d(b_n,y)+d(Ty,Tb_n)+d(c_n,z)+d(Tz,Tc_n)).
$$
Again, letting $n\to \infty $, we get inequality~(\ref{e1}).

Let now $x, y, z\in X \setminus M$, and let $(a_n), (b_n)$ and $(c_n)$ be sequences in $M$ such that $a_n\to x$, $b_n\to y$ and $c_n\to z$.
Then
$$
d(Tx,Ty)+d(Ty,Tz)+d(Tx,Tz)
$$
$$
\leqslant d(Tx,Ta_n)+d(Ta_n,Tb_n)+d(Tb_n,Ty)
$$
$$
+d(Ty,Tb_n)+d(Tb_n,Tc_n)+d(Tc_n,Tz)
$$
$$
+d(Tx,Ta_n)+d(Ta_n,Tc_n)+d(Tc_n,Tz)
$$
$$
\leqslant \lambda(d(a_n,Ta_n)+d(b_n,Tb_n)+d(c_n,Tc_n))
$$
$$
+2d(Ta_n,Tx)
+2d(Tb_n,Ty)
+2d(Tc_n,Tz)
$$
(using the inequality
\begin{equation*}
  d(a_n,Ta_n)\leqslant d(a_n,x)+d(x,Tx)+d(Tx,Ta_n)
\end{equation*}
and inequalities~(\ref{s1}) and~(\ref{s2}) we get)
$$
\leqslant \lambda(d(x,Tx)+d(y,Ty)+d(z,Tz))
$$
$$
+2d(Ta_n,Tx)
+2d(Tb_n,Ty)
+2d(Tc_n,Tz)
$$
$$
+\lambda(
d(a_n,x)+d(Tx,Ta_n)+
d(b_n,y)+d(Ty,Tb_n)+d(c_n,z)+d(Tz,Tc_n)).
$$
Again, letting $n\to \infty$, we get inequality~(\ref{e1}). Hence, $T$ is a generalized Kannan type mapping on $X$, which completes the proof.
\end{proof}

\noindent\textbf{Acknowledgements}

\noindent This work was partially supported by a grant from the Simons Foundation (Award 1160640, Presidential Discretionary-Ukraine Support Grants, E. Petrov)

\end{document}